\definecolor{gr}{rgb}   {0.,   0.69,   0.23 }
\definecolor{bl}{rgb}   {0.,   0.5,   1. }
\definecolor{mg}{rgb}   {0.85,  0.,    0.85}
\definecolor{or}{rgb}   {0.9,  0.5,   0.}
\newcommand{\dd}{\mathrm{d}}
\renewcommand{\Tilde}{\widetilde}
\renewcommand{\Bar}{\overline}
\newcommand{\RR}{\mathbb{R}}
\newcommand{\NN}{\mathbb{N}}
\newcommand{\cO}{\mathcal{O}}
\newcommand{\cD}{\mathcal{D}}
\newcommand{\cH}{\mathcal{H}}
\newcommand{\mx}{\mathrm{max}}
\newcommand{\vol}{\mathop{\mathrm{Vol}}}
\newcommand{\area}{\mathop{\mathrm{Area}}}
\newcommand{\Bk}{\color{black}}
\newtheorem{theorem}{Theorem}
\newtheorem{prop}[theorem]{Proposition}
\newtheorem{cor}[theorem]{Corollary}
\newtheorem{lemma}[theorem]{Lemma}
\theoremstyle{definition}
\newtheorem*{oq}{Open question}
\begin{document}

\title{Mean curvature bounds and eigenvalues of~Robin Laplacians}

\author{Konstantin Pankrashkin}

\address{Laboratoire de math\'ematiques, Universit\'e Paris-Sud, B\^atiment 425, 91405 Orsay Cedex, France}

\email{konstantin.pankrashkin@math.u-psud.fr}
\urladdr{http://www.math.u-psud.fr/~pankrash/}

\author{Nicolas Popoff}

\address{Centre de Physique Th\'eorique, Campus de Luminy, Case 907, 13288 Marseille cedex 9, France}

\email{nicolas.popoff@cpt.univ-mrs.fr}
\urladdr{http://perso.univ-rennes1.fr/nicolas.popoff/}

\begin{abstract}
We consider the Laplacian with
 attractive Robin boundary conditions,
\[
Q^\Omega_\alpha u=-\Delta u, \quad \dfrac{\partial u}{\partial n}=\alpha u  \text{ on } \partial\Omega,
\]
in a class of bounded smooth domains $\Omega\in\mathbb{R}^\nu$; here $n$ is the outward unit normal and $\alpha>0$ is a constant.
We show that for each $j\in\mathbb{N}$ and $\alpha\to+\infty$, the $j$th eigenvalue $E_j(Q^\Omega_\alpha)$ has the asymptotics
\[
E_j(Q^\Omega_\alpha)=-\alpha^2 -(\nu-1)H_\mathrm{max}(\Omega)\,\alpha+{\mathcal O}(\alpha^{2/3}),
\]
where $H_\mathrm{max}(\Omega)$ is the maximum mean curvature at $\partial \Omega$.
The discussion of the reverse Faber-Krahn inequality
gives rise to a new geometric problem concerning the minimization of $H_\mathrm{max}$.
In particular, we show that the ball is the strict minimizer
of $H_\mathrm{max}$ among the smooth star-shaped domains of a given volume, which leads
to the following result:
if $B$ is a ball and $\Omega$ is any other star-shaped smooth domain of the same volume,
then for any fixed $j\in\mathbb{N}$
we have $E_j(Q^B_\alpha)>E_j(Q^\Omega_\alpha)$ for large $\alpha$.
An open question concerning a larger class of domains is formulated.
\end{abstract}

\maketitle

\section{Introduction}

Let $\Omega\subset \RR^\nu$, $\nu\ge 2$, be a bounded, connected, $C^3$ smooth domain; such domains will be called \emph{admissible}
through the text. For $\alpha\in\RR$, denote by $Q^\Omega_\alpha$ the self-adjoint operator in $L^2(\Omega)$
acting as $Q^\Omega_\alpha u =-\Delta u$ on the functions $u\in H^2(\Omega)$ satisfying the Robin boundary conditions
\[
\dfrac{\partial u} {\partial n} =\alpha u \text{ on } S:=\partial \Omega,
\]
where $n$ is the outward pointing unit normal vector at $S$.

More precisely, $Q^\Omega_\alpha$ is
the self-adjoint operator in $L^2(\Omega)$ associated with
the quadratic form $q^\Omega_\alpha$ defined on the domain
$\cD(q^\Omega_\alpha)=H^1(\Omega)$ by
\[
q^\Omega_\alpha(u,u)=\int_{\Omega} |\nabla u|^2\dd x -\alpha \int_S |u|^2\dd S,
\]
where $\dd S$ stands for the $(\nu-1)$-dimensional Hausdorff measure on $S$.
We remark that the boundary $S$ is not necessarily connected. 

Throughout the paper, for a lower semibounded 
 self-adjoint operator $Q$ with a compact resolvent
we denote by $E_j (Q)$ its $j$th eigenvalue when numbered in non-decreasing order and counted
according to the multiplicities.
The aim of the paper is to discuss some relations between the geometry
of $\Omega$ and the eigenvalues $E_j (Q^\Omega_\alpha)$ in
the asymptotic regime $\alpha\to +\infty$.

The first order asymptotics of the low lying eigenvalues has been investigated
in the previous papers  \cite{LaOckSa98,LouZhu04,LevPar08,DaKe10}
and it was shown that for any fixed $j\in\NN$ one has
\[
E_j(Q^\Omega_\alpha)=-\alpha^2 +o(\alpha^2), \quad \alpha\to+\infty.
\]
Some results concerning the convergence of
higher eigenvalues were obtained in~\cite{cch}.
We mention also several works dealing with non-smooth
boundaries~\cite{HP,LevPar08,McC}.
Only few works are dedicated to the study of subsequent terms in the asymptotic expansion.
In the two-dimensional case ($\nu=2$) it was shown \cite{ExMinPar14,Pank13}
that for any fixed $j\in\NN$ one has
\begin{equation}
      \label{eq-ejj}
E_j(Q^\Omega_\alpha)=-\alpha^2 -\gamma_{\mx}\alpha+\cO(\alpha^{2/3}), \quad \alpha\to+\infty,
\end{equation}
where $\gamma_\mx$ is the maximum curvature at the boundary. Some results for the principal eigenvalue of balls and spherical shells 
in arbitrary dimension were obtained in~\cite{FreKre14}. Our first aim is to find an analog of~\eqref{eq-ejj}
for general domains in higher dimensions, in particular we wonder what is the quantity which plays the role of $\gamma_{\mx}$
in dimension $\nu\geq 3$.

Let us introduce the necessary notation. As mentioned above, $s\mapsto n(s)$ is the Gauss map on $S$, i.e.
$n(s)$ is the outward pointing unit normal vector at $s\in S$.
Denote
\[
L_s:=\dd n(s): T_s S\to T_s S
\]
the shape operator at $s\in S$. Its eigenvalues
$\kappa_1,\dots,\kappa_{\nu-1}$ are the so-called principal curvatures at $s$,
and the mean curvature $H(s)$ at $s$ is defined by
\[
H(s)=\dfrac{\kappa_1(s)+\dots+\kappa_{\nu-1}}{\nu-1}\equiv \dfrac{1}{\nu-1}\, \mathop{\mathrm{tr}} L_s.
\]
Due to the assumption $S\in C^3$ we have at least $H\in C^1(S)$, and we will denote
\[
H_\mx\equiv H_\mx(\Omega):=\max_{s\in S} H(s).
\]

\begin{theorem}\label{thm1}
Let $\nu\geq2$ and $\Omega\subset\RR^\nu$ be an admissible domain,
then for any fixed $j\in \NN$ one has the asymptotics
\begin{equation}
       \label{eq-th1}
E_j(Q^\Omega_\alpha)=-\alpha^2 - (\nu-1) H_\mx(\Omega) \alpha +\cO(\alpha^{2/3}) \text{ as } \alpha\to+\infty.
\end{equation}
If, additionally, the boundary is $C^{4}$, then the remainder estimate $\cO(\alpha^{2/3})$ can be replaced by $\cO(\alpha^{1/2})$
\end{theorem}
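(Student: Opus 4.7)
The overall strategy is the one developed in \cite{ExMinPar14,Pank13} for $\nu=2$, generalised to arbitrary dimension: localise eigenfunctions near the boundary, straighten $S$ by Fermi coordinates, and reduce to a one-dimensional Robin problem in the normal direction whose effective coefficient encodes the mean curvature, plus a $(\nu-1)$-dimensional tangential correction.

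\emph{Step 1: localisation near $S$.} Agmon-type estimates show that eigenfunctions of $Q^\Omega_\alpha$ with eigenvalue near $-\alpha^2$ decay like $e^{-c\alpha\,\mathrm{dist}(\cdot,S)}$. Combining an IMS-type partition of unity with Dirichlet--Neumann bracketing, I would replace $Q^\Omega_\alpha$, up to exponentially small errors and a piece whose spectrum is bounded below uniformly in $\alpha$, by a ``tube'' operator on $\Omega_\delta=\{x\in\Omega:\mathrm{dist}(x,S)<\delta\}$ for a small fixed $\delta>0$.

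\emph{Step 2: Fermi coordinates and gauge transformation.} In $\Omega_\delta$, change to coordinates $(s,t)\in S\times[0,\delta)$, $x=s-t\,n(s)$, in which the Euclidean metric is $dt^2\oplus g_S(s)(I-tL_s)^2$ and the volume Jacobian reads $J(s,t)=\sqrt{\det g_S(s)}\,\det(I-tL_s)$. Apply the unitary $L^2$-isomorphism $v:=J^{1/2}u$ to flatten the measure. Integrating by parts the resulting mixed $\partial_t$ cross term produces a boundary term at $t=0$ equal to $-\tfrac{\nu-1}{2}H(s)|v(s,0)|^2$, since $\partial_t\log J\big|_{t=0}=-(\nu-1)H(s)$. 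Thus, up to a bounded potential and a tangential kinetic term $\langle G^{-1}(s,t)\nabla_s v,\nabla_s v\rangle$ with $G(s,0)=g_S(s)$, the transformed quadratic form is, fiberwise in $s$, that of a $1$D Robin operator on $[0,\delta]$ with \emph{effective} coefficient $\alpha+\tfrac{\nu-1}{2}H(s)$, whose ground-state eigenvalue is $-\alpha^2-(\nu-1)H(s)\alpha+\mathcal O(1)$.

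\emph{Step 3: upper bound and remainder optimisation.} Let $s_\star$ attain $H_{\max}$ and plug in the trial function $u(s,t)=\chi_\rho(s-s_\star)\phi_\alpha(t)$ with $\phi_\alpha$ the $1$D Robin ground state ($\phi_\alpha(t)\sim\sqrt{2\alpha}\,e^{-\alpha t}$) and $\chi_\rho$ a smooth cutoff of scale $\rho$. Tangential derivatives cost $\mathcal O(\rho^{-2})$, and replacing $H(s)$ by $H_{\max}$ on the support costs $\mathcal O(\alpha\rho)$ under the Lipschitz bound $|H(s)-H_{\max}|\le C\rho$ (which needs only $H\in C^1$, i.e.\ $S\in C^3$). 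Balancing $\rho^{-2}\sim\alpha\rho$ gives $\rho=\alpha^{-1/3}$ and the $\mathcal O(\alpha^{2/3})$ error. Under $S\in C^4$ we get $H\in C^2$ and $H_{\max}-H(s)=\mathcal O(\rho^2)$ at an interior maximum, so the balance $\rho=\alpha^{-1/4}$ yields the sharper $\mathcal O(\alpha^{1/2})$.

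\emph{Step 4: lower bound, main obstacle.} Use the fiberwise projector onto the $1$D Robin ground state $\phi_{s,\alpha}(t)$. In the range of the projector the form reduces to $-\Delta_{g_S}+\mu_\alpha(s)$ with $\mu_\alpha(s)=-\alpha^2-(\nu-1)H(s)\alpha+\mathcal O(1)$, while the orthogonal complement is controlled by the transverse spectral gap, which is of order $\alpha^2$. A tangential min--max using cutoffs $\chi_\rho$ around $s_\star$ then gives the matching lower bound. The principal technical difficulty is precisely to turn this ``freeze-the-tangential-variable'' heuristic into a quadratic-form inequality valid uniformly in $\alpha$: all cross terms arising from the $s$-dependence of $\phi_{s,\alpha}$, from the Fermi metric expansion $(I-tL_s)^{-2}=I+2tL_s+\cdots$, and from the gauge transformation, must be absorbed by Cauchy--Schwarz against the large transverse gap, with weights tuned to the $\alpha^{2/3}$ (resp. $\alpha^{1/2}$) target and with regularity demands matching $S\in C^3$ (resp.\ $C^4$).
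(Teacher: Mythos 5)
Your Steps~2 and~3 match the paper's approach in substance: Fermi coordinates $x=s-tn(s)$, removal of the Jacobian by the unitary $v=J^{1/2}u$ producing the boundary term $-\tfrac{\nu-1}{2}H(s)|v(s,0)|^2$ after integration by parts, and an upper bound by quasi-modes of the form $\chi_\rho(s)\psi_\alpha(t)$, with the balance $\rho^{-2}\sim\alpha\rho$ giving $\rho=\alpha^{-1/3}$ and $\mathcal O(\alpha^{2/3})$, improved to $\rho=\alpha^{-1/4}$ and $\mathcal O(\alpha^{1/2})$ using a $C^2$ expansion of $H$ at its maximum when $S\in C^4$. Two points of genuine divergence deserve comment.

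First, in Step~1 you invoke Agmon estimates and an IMS partition. The paper does not need these: it uses pure Dirichlet--Neumann bracketing on a tube $\Omega_\delta$ of \emph{fixed} width $\delta$, writing $Q^{\Omega,N,\delta}_\alpha=B^{\Omega,N,\delta}_\alpha\oplus(-\Delta)^N_{\Theta_\delta}$ (and similarly with Dirichlet), and observing that the Laplacian on the interior piece $\Theta_\delta$ is nonnegative so it never contributes to negative eigenvalues. This is cleaner and avoids all decay arguments.

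Second, and more importantly, your Step~4 (lower bound) proposes a fiberwise projection onto the transverse ground state and control of cross terms via the $\mathcal O(\alpha^2)$ spectral gap, and you identify this absorption as ``the principal technical difficulty.'' The paper sidesteps it entirely. After bounding the tangential metric coefficients by constants ($C_-g^{-1}\le G^{-1}\le C_+g^{-1}$) and absorbing the various $\varphi$-dependent terms into constants $A_\pm,B_\pm$, the lower-bound form $k^-_\alpha$ replaces the position-dependent Robin parameter $\alpha+m(s)$ by its maximum $\alpha+m_\mx$; this is a legitimate one-sided bound, and crucially it makes $K^-_\alpha$ an \emph{exact} tensor product $(A_-L+B_-)\otimes 1 + 1\otimes T^-_\alpha$ with $L$ the Laplace--Beltrami operator on $S$. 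The spectrum is then read off immediately: $E_j(K^-_\alpha)=E_1(T^-_\alpha)+A_-E_j(L)+B_-=-(\alpha+m_\mx)^2+\mathcal O(1)$, so the lower bound actually carries an $\mathcal O(1)$ remainder; the $\mathcal O(\alpha^{2/3})$ in the theorem comes solely from the upper bound. Your projector-based route would, if carried through, yield a sharper effective one-dimensional Schr\"odinger operator on $S$ and potentially a better remainder, but it requires exactly the heavy absorption machinery you flag, and none of it is needed for the stated result. Note also that your phrase ``a tangential min--max using cutoffs $\chi_\rho$ around $s_\star$ gives the matching lower bound'' is not how one obtains a lower bound in max--min; trial functions only produce upper bounds, and a lower bound must come from a form inequality valid on the whole domain, which is precisely what the tensor-product trick supplies.
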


The proof is given in Section~\ref{sec1}.

The result of Theorem~\ref{thm1} can be used to discuss some questions related to the so-called
reverse Faber-Krahn inequality. It was conjectured in \cite{Bar77} that for any $\alpha>0$
among the domains $\Omega$ of the same volume it is the ball which maximizes the first eigenvalue of $Q^\Omega_\alpha$.
In such a  setting, the conjecture appears to be false as the very recent counterexample of \cite{FreKre14} shows:
if $B$ is a ball of radius $r>0$ and $\Omega$ is a spherical shell of the same volume, then for large $\alpha$
one has $E_1(Q^B_\alpha)<E_1(Q^\Omega_\alpha)$. This result is easily visible with the help
of our result: one has $H_\mx(B)=1/r$ and $H_\mx(\Omega)=1/R$ with $R$ being the outer radius of $\Omega$. 
Therefore, $H_\mx(B)<H_\mx(\Omega)$, which gives the sought inequality.
However, some positive results are available for restricted classes of domains
and special ranges of~$\alpha$. In particular, the conjecture is true for domains which are in a sense close to a ball \cite{Bar77,FerNiTr14}
as well as for  small $\alpha$ and two-dimensional domains \cite{FreKre14}.
(We remark that the situation for negative $\alpha$ is completely different: the ball minimizes
the first eigenvalue for any $\alpha<0$, see \cite{Bos86,Dan06,BucAl14}).

With the help of Theorem~\ref{thm1} one can easily see that the study of Faber-Krahn-optimal domains 
for large $\alpha$ leads to the purely geometric question: to minimize
$H_\mx(\Omega)$ in a given class of domains. It seems that such a problem setting is new and may have its own interest. Some aspects of this problem will be discussed in~Section~\ref{sec3}.
In particular, we show the following lower bound, see Subsection~\ref{ssec6}:
\begin{theorem}
\label{thm2}
Let $\Omega\subset\RR^\nu$ be a bounded, star-shaped, $C^2$ smooth domain, then
\begin{equation}
\label{E:MHmx}
H_{\mx}(\Omega) \ge \left(\frac{\vol B_{\nu}}{\vol \Omega}\right)^{1/\nu}
\end{equation}
where $B_\nu$ is the unit ball in $\RR^\nu$ .
Moreover, the above lower bound is an equality if and only if $\Omega$ is a ball.
\end{theorem}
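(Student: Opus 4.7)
The plan is to chain a Minkowski-type integral identity with the classical isoperimetric inequality in $\RR^\nu$. Since both $H_\mx(\Omega)$ and $\vol\Omega$ are translation-invariant, I may assume $\Omega$ is star-shaped with respect to the origin, which (for a $C^1$ domain) is equivalent to $\langle x, n(x)\rangle \ge 0$ at every $x\in S := \partial\Omega$. Two integral identities on $S$ will be used:
\[
\int_S \langle x, n(x)\rangle\, \dd S = \nu\,\vol\Omega,\qquad \int_S H(x)\,\langle x, n(x)\rangle\, \dd S = \area(S).
\]
The first is just the ambient divergence theorem applied in $\Omega$ to the position field $V(x)=x$. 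The second is the classical Minkowski formula, which I would obtain from the tangential divergence theorem on the closed hypersurface $S$,
\[
\int_S \operatorname{div}_S V\, \dd S = (\nu-1)\int_S H\,\langle V,n\rangle\, \dd S
\]
(valid because $\operatorname{div}_S n = \operatorname{tr} L_s = (\nu-1)H$), by plugging in $V(x)=x$ and noting that its intrinsic tangential divergence is $\nu-1$.

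Bounding $H(x)\le H_\mx(\Omega)$ in the second identity and using the non-negativity of $\langle x,n\rangle$ together with the first identity yields
\[
\area(S) \le H_\mx(\Omega)\int_S \langle x,n\rangle\, \dd S = \nu\,H_\mx(\Omega)\,\vol\Omega.
\]
Inserting the Euclidean isoperimetric inequality $\area(S)\ge \nu\,(\vol B_\nu)^{1/\nu}(\vol\Omega)^{(\nu-1)/\nu}$ and dividing through by $\nu\,\vol\Omega$ produces exactly \eqref{E:MHmx}.

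For rigidity, if equality holds in \eqref{E:MHmx}, then both of the inequalities just displayed must be saturated; in particular the isoperimetric inequality is attained, which forces $\Omega$ to be a ball. I do not expect any substantial obstacle. The only point requiring care is justifying the Minkowski identity under the stated $C^2$ hypothesis, but $C^2$ is precisely the regularity under which the shape operator $L_s$ and the mean curvature $H$ are continuous on $S$ and the tangential divergence theorem applies.
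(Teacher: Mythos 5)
Your proposal follows the same route as the paper: reduce to the star-shaped-at-origin case, combine the divergence identity $\int_S p\,\dd S=\nu\vol\Omega$ and the Minkowski formula $\int_S pH\,\dd S=\area S$ with $p\ge 0$ to get $\area S\le \nu H_\mx(\Omega)\vol\Omega$, then invoke the isoperimetric inequality and its rigidity. The only cosmetic difference is that you derive the Minkowski formula from the tangential divergence theorem rather than citing it.
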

By combining the two theorems we arrive at the following spectral result, which can viewed
as an asymptotic version of the reverse Faber-Krahn inequality for star-shaped domains:
\begin{cor}\label{cor1}
Let $\Omega\subset\RR^\nu$ be admissible, star-shaped, different from a ball.
Let $B$ be a ball in $\RR^\nu$ with $\vol B=\vol \Omega$.
For any $j\in \NN$ there exists $\alpha_{0}>0$ such that for all $\alpha \ge \alpha_{0}$ there holds 
the strict inequality
\[
E_{j}(Q^{B}_\alpha)>
E_{j}(Q^\Omega_\alpha).
\]
\end{cor}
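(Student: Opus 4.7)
The plan is to derive the corollary as an essentially direct combination of Theorem~\ref{thm1} and Theorem~\ref{thm2}, since both ingredients carry the whole content. First I would apply Theorem~\ref{thm1} separately to $B$ and to $\Omega$ (both are admissible: $\Omega$ by assumption, $B$ because balls are smooth), obtaining for each fixed $j\in\NN$ the two expansions
\[
E_j(Q^B_\alpha)=-\alpha^2-(\nu-1)H_\mx(B)\,\alpha+\cO(\alpha^{2/3}),\qquad
E_j(Q^\Omega_\alpha)=-\alpha^2-(\nu-1)H_\mx(\Omega)\,\alpha+\cO(\alpha^{2/3}),
\]
as $\alpha\to+\infty$. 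Subtracting, the $-\alpha^2$ terms cancel exactly and I get
\[
E_j(Q^B_\alpha)-E_j(Q^\Omega_\alpha)=(\nu-1)\bigl(H_\mx(\Omega)-H_\mx(B)\bigr)\,\alpha+\cO(\alpha^{2/3}).
\]

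Next I would compute $H_\mx(B)$ explicitly. If $B$ has radius $r$ then its shape operator at every boundary point equals $r^{-1}\,\mathrm{Id}$, so $H_\mx(B)=1/r$. Since $\vol B=\vol\Omega$, one has $r=(\vol\Omega/\vol B_\nu)^{1/\nu}$, hence
\[
H_\mx(B)=\Bigl(\frac{\vol B_\nu}{\vol\Omega}\Bigr)^{1/\nu}.
\]
Now I invoke Theorem~\ref{thm2}: since $\Omega$ is star-shaped and $C^{2}$ (actually $C^3$), the inequality \eqref{E:MHmx} holds, and because $\Omega$ is assumed not to be a ball the equality case is excluded, giving the strict inequality
\[
H_\mx(\Omega)>\Bigl(\frac{\vol B_\nu}{\vol\Omega}\Bigr)^{1/\nu}=H_\mx(B).
\]

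Plugging this back, the coefficient $c:=(\nu-1)\bigl(H_\mx(\Omega)-H_\mx(B)\bigr)$ is strictly positive, so
\[
E_j(Q^B_\alpha)-E_j(Q^\Omega_\alpha)=c\,\alpha+\cO(\alpha^{2/3}),
\]
and the right-hand side is positive for all $\alpha$ larger than some threshold $\alpha_0=\alpha_0(j,\Omega)$, which is exactly the claim. There is no real obstacle in this argument: all the work has been front-loaded into Theorems~\ref{thm1} and~\ref{thm2}, and the one point deserving a brief sentence is the verification that the ball $B$ is itself admissible so that Theorem~\ref{thm1} may be applied to it with the same $\cO(\alpha^{2/3})$ remainder (one could even invoke the sharper $\cO(\alpha^{1/2})$ remainder here since $\partial B$ is $C^\infty$, but this is not needed).
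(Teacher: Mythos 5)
Your proof is correct and takes essentially the same route as the paper: applying Theorem~\ref{thm1} to both $B$ and $\Omega$, subtracting so that the $-\alpha^2$ terms cancel, and then invoking the strict inequality $H_\mx(\Omega)>H_\mx(B)$ from Theorem~\ref{thm2} (the paper packages the first two steps as Corollary~\ref{cor5}, but the content is identical). The explicit computation $H_\mx(B)=(\vol B_\nu/\vol\Omega)^{1/\nu}$ and the remark on admissibility of $B$ are exactly the right details to spell out.
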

It would be interesting to understand whether Theorem~\ref{thm2} and hence Corollary~\ref{cor1}
can be extended to a larger class of domains:

\begin{oq} Let $\Omega\in\RR^\nu$
be a bounded, $C^2$ smooth domain with a~\emph{connected boundary} and let $B$ be a ball of the same volume.
Do we have $H_\mx(\Omega)\ge H_\mx(B)$?
\end{oq}

At last we remark that one of our motivations for the present work
came from various analogies between the Robin problem and the Neumann magnetic laplacian in two dimensions,
in particular, a certain analogue of \eqref{eq-ejj} holds for the magnetic case~\cite{BeSt,HeMo01}.
However, a magnetic field has a non-trivial interaction with the boundary in higher dimension \cite{HeMo04},
and the comparison seems to stop here as our result shows.

\section{Asymptotics of the eigenvalues}\label{sec1}

This section is dedicated to the proof of Theorem~\ref{thm1}.
The scheme of the proof is close to that of~\cite{ExMinPar14,Pank13}
and is based on the Dirichlet-Neumann bracketing in suitably chosen tubular neighborhoods of $S$ and on an explicit
spectral analysis of some one-dimensional operators.
We recall the max-min principle for the eigenvalues of self-adjoint operators, see e.g.~\cite[Sec.~12.1]{ksm}:
if $Q$ is a lower semibounded 
self-adjoint operator with a compact resolvent
in a Hilbert space $\cH$ and $q$ is its quadratic form, then for any $j\in\NN$
we have
\[
E_j(Q)=\max_{\psi_1, \dots, \psi_{j-1}\in\cH} \min_{\substack{u\in \cD(q),\, u\ne 0\\
u\perp \psi_1,\dots, \psi_{j-1}}} \dfrac{q(u,u)}{\langle u, u\rangle}.
\]

\subsection{Dirichlet-Neumann bracketing}
For $\delta>0$ denote
\[
\Omega_\delta:=\big\{
x\in \Omega: \, \inf_{s\in S} |x-s|<\delta
\},
\quad
\Theta_\delta:=\Omega\setminus \overline{\Omega_\delta}.
\]
Denote by $q^{\Omega,N,\delta}_\alpha$ and $q^{\Omega,D,\delta}_\alpha$
the quadratic forms defined by the same expression as $q^\Omega_\alpha$
but acting respectively on the domains
\begin{gather*}
\cD(q^{\Omega,N,\delta}_\alpha)=H^1(\Omega_\delta)\cup H^1(\Theta_\delta),
\quad
\cD(q^{\Omega,D,\delta}_\alpha)=\Tilde H^1_0(\Omega_\delta)\cup H^1_0(\Theta_\delta),\\
\widetilde H^1_0(\Omega_\delta):=\{f\in H^1(\Omega_\delta): \, f=0 \text{ at } \partial\Omega_\delta\setminus S \}
\end{gather*}
and denote by $Q^{\Omega,N,\delta}_\alpha$ and $Q^{\Omega,D,\delta}_\alpha$
the associated self-adjoint operators in $L^2(\Omega)$.
Due to the inclusions
\[
\cD(q^{\Omega,D,\delta}_\alpha)\subset\cD(q^{\Omega}_\alpha)\subset\cD(q^{\Omega,N,\delta}_\alpha)
\]
and according to the max-min principle, for each $j\in\NN$ we have the inequalities
\[
E_j(Q^{\Omega,N,\delta}_\alpha)\le E_j(Q^\Omega_\alpha)\le E_j(Q^{\Omega,D,\delta}_\alpha).
\]
Furthermore, we have the representations
\[
Q^{\Omega,N,\delta}_\alpha=B^{\Omega,N,\delta}_\alpha \oplus
(-\Delta)^N_{\Theta_\delta},
\quad
Q^{\Omega,D,\delta}_\alpha=B^{\Omega,D,\delta}_\alpha \oplus
(-\Delta)^D_{\Theta_\delta},
\]
where $B^{\Omega,N,\delta}_\alpha$ and $B^{\Omega,D,\delta}_\alpha$
are the self-adjoint operators in $L^2(\Omega_\delta)$
associated with the respective quadratic forms
\begin{gather*}
b^{\Omega,\star,\delta}_\alpha(u,u)=\int_{\Omega_\delta} |\nabla u|^2dx -\alpha \int_S |u|^2d S,
\quad \star\in\{N,D\},\\
 \cD(b^{\Omega,N,\delta}_\alpha)=H^1(\Omega_\delta),\quad
\cD(b^{\Omega,D,\delta}_\alpha)=\widetilde H^1_0(\Omega_\delta),
\end{gather*}
and  $(-\Delta)^N_{\Theta_\delta}$ and $(-\Delta)^D_{\Theta_\delta}$
denote respectively the Neumann and the Dirichlet Laplacian in $\Theta_\delta$.
As both Neumann and Dirichlet Laplacians are non-negative, we have the inequalities
\begin{equation}
       \label{eq-est1}
E_j(B^{\Omega,N,\delta}_\alpha)\le E_j(Q^\Omega_\alpha)\le E_j(B^{\Omega,D,\delta}_\alpha)
\text{ for all $j$ with } E_j(B^{\Omega,D,\delta}_\alpha)<0.
\end{equation}
We remark that these inequalities are valid for any value of $\delta$.

\subsection{Change of variables}
\label{S:CV}
In order to study the eigenvalues of the operators $B^{\Omega,N,\delta}_\alpha$
and $B^{\Omega,D,\delta}_\alpha$ we proceed first with a change variables in $\Omega_\delta$
with small $\delta$. The computations below are very similar to those
performed in~\cite{CarExKr04} for a different problem.

It is a well-known result of the differential geometry
that for $\delta$ sufficiently small
 the map $\Phi$ defined by
\[
\Sigma:=S\times (0,\delta)\ni (s,t)\mapsto \Phi(s,t)=s-tn(s)\in \Omega_\delta
\]
is a diffeomorphism.
The metric $G$ on  $\Sigma$ induced by this embedding is
\begin{equation}
      \label{eq-gg}
G=g\circ (I_s-tL_s)^2 + \dd t^2,
\end{equation}
where $I_s:T_s S\to T_s S$ is the identity map,
and $g$ is the metric on $S$ induced by the embedding in $\RR^\nu$.
The associated volume form $\dd\Sigma$ on $\Sigma$ is
\[
\dd\Sigma =|\det G|^{1/2}\dd s\, \dd t=\varphi(s,t)|\det g|^{1/2} \dd s \,\dd t=\varphi \, \dd S \,\dd t,
\]
where
\[
\dd S=|\det g|^{1/2}\dd s
\]
is the induced $(\nu-1)$-dimensional volume form on $S$, and the weight $\varphi$ is given by
\begin{equation}
      \label{eq-r1}
\varphi(s,t):=\big|\det (I_s-t L_s)\big|=
1- t \mathop{\mathrm{tr}}L_s +p(s,t)t^2\equiv
1-(\nu-1) H(s) t +p(s,t)t^2,
\end{equation}
with $p$ being a polynomial in $t$ with $C^1$ coefficients depending on $s$.

Consider the unitary map
\[
U: L^2(\Omega_\delta)\to L^2(\Sigma,\dd\Sigma),
\quad
Uf=f\circ \Phi,
\]
and the quadratic forms 
\[
c^{\star}_\alpha(f,f)=b^{\Omega,\star,\delta}_\alpha(U^{-1}f, U^{-1}f)
\quad \cD (c^\star_\alpha)=U \cD (b^{\Omega,\star,\delta}_\alpha),
\quad \star\in\{N,D\}.
\]
We have then
\begin{align*}
c^{N}_\alpha(u,u)&=\int_{\Sigma} G^{jk}\overline{\partial_j u}  \partial_k u\,\dd\Sigma_\alpha
-\alpha \int_S |u(s,0)|^2 \dd S, \quad \cD(c^{N}_\alpha)= H^1(\Sigma),\\
c^{D}_\alpha(u,u)&=\text{the restriction of $c^{N}_\alpha$ to }
\cD(c^{D}_\alpha)=\widetilde H^1_0(\Sigma),
\end{align*}
with
\[
\widetilde H^1_0(\Sigma):=\big\{f\in H^1(\Sigma): f(\cdot,\delta)=0\big\}, \quad
(G^{jk}):=G^{-1}.
\]
\subsection{Estimates of the metrics}
We remark that due to \eqref{eq-gg} we can estimate, with some $0<C_-<C_+$,
\[
C_- g^{-1} + \dd t^2\le G^{-1}\le C_+ g^{-1} + \dd t^2, \quad (g^{\rho\mu}):= g^{-1},
\]
which shows that we have the form inequalities
\begin{equation}
       \label{eq-cc}
c^-_\alpha\le c^{N}_\alpha
 \quad \text{ and} \quad
c^{D}_\alpha\le c^+_\alpha
\end{equation}
with
\[
\begin{aligned}
c^-_\alpha(u,u)&:=C_-\int_{\Sigma} g^{\rho \mu} \,\overline{\partial_\rho u} \,\partial_\mu u\,\dd\Sigma 
+\int_{\Sigma} |\partial_t u|^2\dd\Sigma-\alpha \int_S |u(s,0)|^2 \dd S,\\
& \quad \cD(c^-_\alpha)=\cD(c^{N}_\alpha)=H^1(\Sigma),\\
c^+_\alpha(u,u)&:=C_+\int_{\Sigma} g^{\rho \mu} \,\overline{\partial_\rho u} \,\partial_\mu u\,\dd\Sigma
+\int_{\Sigma} |\partial_t u|^2\dd\Sigma-\alpha \int_S |u(s,0)|^2 \dd S,\\
& \quad \cD(c^+_\alpha)=\cD(c^{D}_\alpha)=\widetilde H^1_0(\Sigma).
\end{aligned}
\]
In particular, if $C^-_\alpha$ and $C^+_\alpha$ are the self-adjoint operators
acting in $L^2(\Sigma,\dd\Sigma)$ and associated with the forms $c^-_\alpha$ and $c^+_\alpha$ respectively, then
it follows from \eqref{eq-est1} and \eqref{eq-cc} that
\begin{equation}
       \label{eq-est2}
E_j(C^-_\alpha)\le E_j(Q^\Omega_\alpha)\le E_j(C^+_\alpha)
\text{ for all $j$ with } E_j(C^+_\alpha)<0.
\end{equation}

In order to remove the weight $\varphi$ in $\dd\Sigma$ we introduce another unitary transform
\begin{gather*}
V:L^2\big(S\times(0,\delta),\dd S \,\dd t\big)\equiv L^2(\Sigma, \dd S \,\dd t)\to L^2(\Sigma,\dd\Sigma), \\
 (V f)(s,t)=\varphi(s,t)^{-1/2}f(s,t)
\end{gather*}
and the quadratic forms
\[
d^{\pm}_\alpha(f, f)=c^{\pm}_\alpha(Vf,Vf), \quad \cD(d^{\pm}_\alpha)=V^{-1}\cD(c^{\pm}_\alpha).
\]
 We remark that due to \eqref{eq-r1} we can choose $\delta$ sufficiently small to have
the representation
\begin{equation}
       \label{eq-r2}
\varphi(s,t)^{-1/2}=1+m(s)\, t + t^2P(s,t), \quad P\in C^1(\overline\Sigma), \quad  \varphi >0 \text{ in } \overline\Sigma,
\end{equation}
where we denote for the sake of brevity
\[
m(s):=\dfrac{\nu-1}{2}H(s).
\]
We can write
\[
\partial_\mu (V u)= \varphi^{-1/2} \partial_\mu u + u \,\partial_\mu \varphi^{-1/2},
\]
and
\begin{multline}
     \label{eq-ee1}
\int_{\Sigma} g^{\rho \mu} \overline{\partial_\rho (V u)} \partial_\mu (V u)\,\dd\Sigma=
\int_{\Sigma} g^{\rho \mu} \overline{\partial_\rho (V u)} \partial_\mu (V u) \,\varphi\, \dd S \dd t\\
=\int_{\Sigma} \Big[\varphi^{-1}g^{\rho \mu} \overline{\partial_\rho u} \partial_\mu u
+g^{\rho\mu} \big(\overline u \partial_\rho \varphi^{-1/2}\big)  \varphi^{-1/2}\partial_\mu u\\
+g^{\rho\mu} \varphi^{-1/2}\overline{\partial_\rho u} \big(u \partial_\mu \varphi^{-1/2}\big)  
+|u|^2 g^{\rho \mu }  \overline{\partial_\rho \varphi^{-1/2}}{\partial_\mu \varphi^{-1/2}} \Big] \varphi\,\dd S \dd t.
\end{multline}

As $(g^{\rho\mu})$ is positive definite, using the Cauchy-Schwarz inequality we estimate
\begin{multline*}
\bigg|\int_{\Sigma}g^{\rho\mu} \,\big(\overline u \partial_\rho \varphi^{-1/2}\big)  \big(\varphi^{-1/2}\partial_\mu u \big)\,\varphi\,\dd S \dd t\bigg|^2\\
\le
\int_{\Sigma} \big( g^{\rho \mu} \,\overline{\varphi^{-1/2}\partial_\rho u} \,\varphi^{-1/2}\partial_\mu u\big)\,\varphi\,\dd S \,\dd t \cdot
\int_{\Sigma} |u|^2\Big( g^{\rho \mu} \,\overline{\partial_\rho \varphi^{-1/2}} \,\partial_\mu \varphi^{-1/2}\Big)\varphi\,\dd S \,\dd t\\
=
\int_{\Sigma} g^{\rho \mu} \,\overline{\partial_\rho u} \,\partial_\mu u\,\dd S \,\dd t\cdot
\int_{\Sigma} W|u|^2\,\dd S \,\dd t,
\quad
W:= \varphi\,g^{\rho \mu} \,\overline{\partial_\rho \varphi^{-1/2}} \,\partial_\mu \varphi^{-1/2}.
\end{multline*}
Substituting this inequality into \eqref{eq-ee1} we obtain
\begin{multline*}
\int_{\Sigma} g^{\rho \mu} \,\overline{\partial_\rho u} \,\partial_\mu u \,\dd S \,\dd t
- \int_{\Sigma} W|u|^2\dd S \,\dd t
\le
\int_{\Sigma} g^{\rho \mu} \,\overline{\partial_\rho (V u)} \,\partial_\mu (V u)\,\dd\Sigma\\
\le
2\int_{\Sigma} g^{\rho \mu} \,\overline{\partial_\rho u} \,\partial_\mu u \,\dd S \,\dd t
+2 \int_{\Sigma} W|u|^2\dd S \,\dd t.
\end{multline*}
Denoting 
\[
C_0:=\|W\|_{L^\infty(\Sigma)}
\]
we arrive finally at
\begin{multline*}
\int_{\Sigma} g^{\rho \mu} \,\overline{\partial_\rho u} \,\partial_\mu u\,\dd S \,\dd t
- C_0\int_{\Sigma} u|^2\dd S \dd t
\le
\int_{\Sigma} g^{\rho \mu} \overline{\partial_\rho (V u)} \partial_\mu (V u)\,\dd\Sigma\\
\le
2\int_{\Sigma} g^{\rho \mu} \,\overline{\partial_\rho u} \,\partial_\mu u \,\dd S \,\dd t
+2 C_0\int_{\Sigma} |u|^2\dd S \, \dd t.
\end{multline*}
On the other hand, as follows from \eqref{eq-r2},
\[
\partial_t \varphi^{-1/2}=m +t R \text{ with } R\in C^1(\Bar \Sigma),
\]
and  we have
\begin{multline*}
\int_{\Sigma} \big|\partial_t (Vu)\big|^2\dd\Sigma=\int_\Sigma \varphi \Big| \varphi^{-1/2}\partial_tu  +(m+t R)u\Big|^2\dd S \dd t\\
=\int_\Sigma |\partial_t u|^2\dd S \dd t
+\int_\Sigma (m+tR) \varphi^{1/2}\partial_t |u|^2\dd S \dd t
+\int_\Sigma  (m+tR)^2 \varphi |u|^2\dd S \dd t.
\end{multline*}
By setting
\[
B_1:=\big\|(m+tR)^2\varphi\big\|_{L^\infty(\Sigma)}
\]
we have
\[
-B_1 \int_\Sigma |u|^2\dd S \,\dd t\le
\int_\Sigma  (m+tR)^2 \varphi |u|^2\dd S \,\dd t
\le
B_1\int_\Sigma |u|^2\dd S \,\dd t.
\]
Furthermore, using the integration by parts we arrive at
\begin{multline*}
\int_\Sigma (m+tR)\varphi^{1/2} \partial_t |u|^2\dd S \,\dd t=\int_S  \Big(\int_0^\delta
(m+tR) \varphi^{1/2}\partial_t |u|^2\dd t \Big)\dd S\\
=\int_S \varphi(s,\delta)^{1/2} \big(m(s)+\delta R(s,\delta)\big) \big|u(s,\delta)\big|^2\dd S
-\int_S m(s) \big|u(s,0)\big|^2\dd S\\
-\int_\Sigma \partial_t \big((m+tR) \varphi^{1/2}\big) |u|^2 \dd S \,\dd t.
\end{multline*}
Set
\[
\beta:=\big\|\varphi(\cdot,\delta)^{1/2} \big(m(\cdot)+\delta R(\cdot,\delta)\big)\big\|_{L^\infty(S)},
\quad
B_2:= 
\Big\|\partial_t \big((m+tR) \varphi^{1/2}\big)\Big\|_{L^\infty(\Sigma)}.
\]
By summing up all the terms we see that for any $u\in H^1\big(S\times (0,\delta)\big)$
we have the minoration
\begin{multline*}
d^-_\alpha(u,u)\ge A_-\int_{S\times(0,\delta)} g^{\rho\mu} \,\overline{\partial_\rho u}\,\partial_\mu u 
\,\dd S \,\dd t
+\int_{S\times(0,\delta)} |\partial_t u|^2 \dd S \,\dd t\\
+B_- \int_{S\times(0,\delta)} | u|^2 \dd S \,\dd t
 -\int_S \big(\alpha+m(s)\big) |u(s,0)|^2 \dd S- \beta \int_S |u(s,\delta)|^2 \dd S
\end{multline*}
with
\[
A_-:=C_-, \quad B_-:=-C_- C_0-B_1-B_2.
\]
Using the quantity
\[
m_\mx:=\max_{s\in S} m(s)\equiv \dfrac{\nu-1}{2} H_\mx,
\]
and the inequality
\[
-\big(\alpha+m(s)\big)\ge -(\alpha+m_\mx), \quad s\in S,
\]
we estimate finally, in the sense of forms,
\[
k^-_\alpha \le d^-_\alpha
\]
with
\begin{align*}
k^-_\alpha(u,u):=&\, A_-\int_{S\times(0,\delta)} g^{\rho\mu} \overline{\partial_\rho u}\partial_\mu u \,\dd S \,\dd t
+\int_{S\times(0,\delta)} |\partial_t u|^2 \dd S \,\dd t\\
&\,+B_- \int_{S\times(0,\delta)} | u|^2 \dd S \,\dd t
 -(\alpha+m_\mx)\int_S  |u(s,0)|^2 \dd S\\
 &\,- \beta \int_S |u(s,\delta)|^2 \dd S,\\
\cD(k^-_\alpha)=&H^1\big(S\times (0,\delta)\big).
\end{align*}
We also have the majoration
\[
d^+_\alpha\le k^+_\alpha
\]
with 
\begin{align*}
k^+_\alpha(u,u)=&\,A_+\int_{S\times(0,\delta)} g^{\rho\mu} \,\overline{\partial_\rho u}\,\partial_\mu u \,\dd S \,\dd t
+\int_{S\times(0,\delta)} |\partial_t u|^2 \dd S \,\dd t\\
&\,+B_+ \int_{S\times(0,\delta)} | u|^2 \dd S \,\dd t
 -\int_S \big(\alpha+m(s)\big) |u(s,0)|^2 \dd S,\\
\cD(k^+_\alpha)=&\,\widetilde H^1_0\big(S\times (0,\delta)\big):=\big\{f\in H^1\big(S\times (0,\delta)\big):\, f(\cdot,\delta)=0\big\},
\end{align*}
where
\[
A_+:=2C_+ >0, \quad B_+:=2C_+B_0+B_1+B_2.
\]
Denote by $K^\pm_\alpha$ the self-adjoint operators in $L^2\big(S\times(0,\delta),\dd S \dd t\big)$
associated with the respective forms $k^\pm_\alpha$ then,
due to the preceding constructions, for any $j\in\NN$ we have the inequalities
\[
E_j(K^-_\alpha)\le E_j(C^-_\alpha), \quad E_j(C^+_\alpha)\le E_j(K^+_\alpha),
\]
which implies, due to \eqref{eq-est2},
\begin{equation}
      \label{eq-est3}
E_j(K^-_\alpha)\le E_j(Q^\Omega_\alpha) \le E_j(K^+_\alpha)
\text{ for all $j$ with } E_j(K^+_\alpha)<0.
\end{equation}

We remark that in what precedes the parameter $\delta$ was chosen
sufficiently small but fixed.

\subsection{Analysis of the reduced operators and lower bound} Let us analyze the eigenvalues of $K^-_\alpha$.
We remark that it can be represented as
\[
K^-_\alpha=(A_-L+ B_-)\otimes 1 + 1 \otimes T^-_\alpha,
\]
where $L$ is the (positive) Laplace-Beltrami operator on $S$ and acting in $L^2(S, \dd S)$
and $T^-_\alpha$ is the self-adjoint operator in $L^2(0,\delta)$
associated with the quadratic form
\[
\tau^-_\alpha(f,f)=\int_0^\delta \big|f'(t)\big|^2\dd t- (\alpha+m_\mx) \big|f(0)\big|^2
-\beta \big|f(\delta)\big|^2, \quad
\cD(\tau^-_\alpha)=H^1(0,\delta).  
\]
Clearly, the eigenvalues of $A_-L+ B_-$ do not depend on $\alpha$.
On the other hand, the eigenvalues of $T^-_\alpha$ in the limit
$\alpha\to+\infty$ were already analyzed in the previous works,
and one has the following result, see e.g. \cite[Lemma~3]{Pank13}
or \cite[Lemma~2.5]{ExMinPar14}:
\[
E_1(T^-_\alpha)=-(\alpha+m_\mx)^2 + o(1) \text{ and }
E_2(T^-_\alpha)\ge 0 \text{ as } \alpha\to+\infty.
\]
It follows that for any fixed $j\in\NN$ and  $\alpha\to+\infty$ we have
\begin{multline}
     \label{eq-eee}
E_j(K^-_\alpha)=E_1(T^-_\alpha)+  A_{-}E_j(L)+B_{-} \Bk =-(\alpha+m_\mx)^2+\cO(1)\\
=-\alpha^2-2m_\mx \alpha+\cO(1)
=-\alpha^2 - (\nu-1) H_\mx \,\alpha+ \cO(1).
\end{multline}
Combining this with \eqref{eq-est3} provides the lower bound of \eqref{eq-th1}.

\subsection{Construction of quasi-modes and upper bound} It remains to analyze the eigenvalues of $K^+_\alpha$.
Let us show first an additional auxiliary estimate.
\begin{lemma}\label{lem2}
Let $T^+_\alpha$ denote the self-adjoint operator in $L^2(0,\delta)$
associated with the quadratic form
\begin{align*}
\tau^+_\alpha(f,f)&:=\int_0^\delta \big|f'(t)\big|^2\dd t- (\alpha+m_\mx) \big|f(0)\big|^2, \\
\cD(\tau^+_\alpha)&=\big\{f\in H^1(0,\delta): \, f(\delta)=0\big\}. 
\end{align*}
Then
\begin{equation}
    \label{eq-tpl}
E_1(T^+_\alpha)=-(\alpha+m_\mx)^2+o(1) \text{ and } E_2(T^+_\alpha)\ge 0 \text{ as } \alpha\to +\infty.
\end{equation}
Furthermore, if $\psi_\alpha$ is a normalized eigenfunction for the eigenvalue $E_1(T^+_\alpha)$,
then
\begin{equation}
   \label{eq-pp1}
\big|\psi_\alpha(0)\big|^2=\cO(\alpha) \text{ as } \alpha\to+\infty.
\end{equation}
\end{lemma}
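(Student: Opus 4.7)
The plan is to reduce the lemma to a comparison with the half-line model. I abbreviate $\mu := \alpha + m_{\mx}$ and introduce the operator $T^\infty_\alpha$ in $L^2(0,+\infty)$ associated with the form $\int_0^\infty |f'|^2\,\dd t - \mu|f(0)|^2$ on $H^1(0,+\infty)$. A standard Robin computation shows that $T^\infty_\alpha$ has essential spectrum $[0,+\infty)$ and a unique negative eigenvalue $-\mu^2$, with normalized ground state $\phi_\mu(t)=\sqrt{2\mu}\,e^{-\mu t}$. This is the natural reference operator because the localization length of a ground state near the Robin boundary is $1/\mu$, and for fixed $\delta$ the far-away Dirichlet condition can only perturb things by exponentially small amounts.

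For the lower bounds in \eqref{eq-tpl}, I would exploit the fact that every $f \in \cD(\tau^+_\alpha)$ satisfies $f(\delta)=0$, so its extension by zero lies in $H^1(0,+\infty)$ with the same Dirichlet energy and the same trace at $0$. The form domain of $\tau^+_\alpha$ embeds into that of $T^\infty_\alpha$ with preservation of form values, and the max-min principle gives $E_j(T^+_\alpha) \ge E_j(T^\infty_\alpha)$ for each $j$. In particular, $E_1(T^+_\alpha) \ge -\mu^2$ and $E_2(T^+_\alpha) \ge 0$, since the second max-min value for $T^\infty_\alpha$ is the bottom of its essential spectrum.

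For the matching upper bound on $E_1$, I would insert the explicit test function $f(t) = e^{-\mu t} - e^{\mu(t-2\delta)}$ into the Rayleigh quotient; it belongs to $\cD(\tau^+_\alpha)$ because $f(\delta)=0$. Direct evaluation of the relevant integrals yields $\|f\|^2_{L^2(0,\delta)} = (2\mu)^{-1} + \cO(\delta\,e^{-2\mu\delta})$, $\|f'\|^2_{L^2(0,\delta)} = \mu/2 + \cO(\mu^2\delta\,e^{-2\mu\delta})$ and $|f(0)|^2 = 1 + \cO(e^{-2\mu\delta})$, so $\tau^+_\alpha(f,f)/\|f\|^2 = -\mu^2 + \cO(\mu^3\delta\,e^{-2\mu\delta}) = -\mu^2 + o(1)$ as $\alpha\to+\infty$. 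Equivalently, since the problem is exactly solvable, one can solve the transcendental equation $e^{-2\kappa\delta}(\mu+\kappa) = \mu - \kappa$ coming from the ansatz $f(t) = A(e^{-\kappa t} - e^{-2\kappa\delta}e^{\kappa t})$ and read off $\kappa = \mu + \cO(\mu\,e^{-2\mu\delta})$, hence $E_1(T^+_\alpha)=-\kappa^2 = -\mu^2 + o(1)$.

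For the boundary estimate \eqref{eq-pp1}, I would combine the identity
\[
\|\psi'_\alpha\|^2_{L^2(0,\delta)} - \mu |\psi_\alpha(0)|^2 = E_1(T^+_\alpha) = -\mu^2 + o(1),
\]
valid for the normalized eigenfunction $\psi_\alpha$, with the one-dimensional trace inequality
\[
|\psi_\alpha(0)|^2 = -\int_0^\delta \frac{\dd}{\dd t}|\psi_\alpha(t)|^2\,\dd t \le 2\,\|\psi_\alpha\|_{L^2(0,\delta)}\|\psi'_\alpha\|_{L^2(0,\delta)},
\]
which uses $\psi_\alpha(\delta)=0$. Setting $x := |\psi_\alpha(0)|^2$, the two relations give $x^2/4 \le \mu x - \mu^2 + o(1)$, i.e.\ $(x-2\mu)^2 \le o(1)$, whence $x = 2\mu + o(1) = \cO(\alpha)$. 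The main obstacle is the upper-bound step: choosing a quasi-mode that satisfies the Dirichlet condition at $t=\delta$ without spoiling the leading behaviour $-\mu^2$; once the correct truncation of $\phi_\mu$ is identified, all the correction terms are exponentially small in $\mu\delta$ and absorbed into the $o(1)$ remainder, while the remaining ingredients are purely one-dimensional.
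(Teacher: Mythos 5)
Your proof is correct, and it differs from the paper's in both halves. For~\eqref{eq-tpl} the paper simply cites the one-dimensional lemmas of \cite{Pank13,ExMinPar14}; you instead give a self-contained argument: the lower bounds follow from the form comparison with the half-line Robin operator (extension by zero is indeed form-preserving and shrinks the form domain, so the max-min principle gives $E_j(T^+_\alpha)\ge E_j(T^\infty_\alpha)$, with $E_1(T^\infty_\alpha)=-\mu^2$ and $E_2(T^\infty_\alpha)=\inf\sigma_{\mathrm{ess}}=0$), and the matching upper bound comes from the explicit quasi-mode $e^{-\mu t}-e^{\mu(t-2\delta)}$; the integrals you display are right. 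For~\eqref{eq-pp1} the paper writes the ground state explicitly as $C_1 e^{kt}+C_2 e^{-kt}$, imposes the Dirichlet condition at $\delta$ to show $C_1$ is exponentially small, and reads off $|\psi_\alpha(0)|^2=\mathcal{O}(\alpha)$ from the normalization. You avoid the explicit solution altogether: combining the Rayleigh identity $\|\psi_\alpha'\|^2-\mu|\psi_\alpha(0)|^2=E_1(T^+_\alpha)$ with the trace inequality $|\psi_\alpha(0)|^2\le 2\|\psi_\alpha\|\,\|\psi_\alpha'\|$ (which exploits $\psi_\alpha(\delta)=0$) gives $(x-2\mu)^2\le o(1)$ for $x=|\psi_\alpha(0)|^2$, hence $x=2\mu+o(1)$. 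This is a genuinely different argument: it is shorter once~\eqref{eq-tpl} is in hand, needs no exact solvability of the ODE (so it would transfer to perturbed one-dimensional models), and in fact yields the sharper statement $|\psi_\alpha(0)|^2=2(\alpha+m_\mx)+o(1)$ rather than merely $\mathcal{O}(\alpha)$. The paper's computation, on the other hand, does not rely on~\eqref{eq-tpl} as an input for the boundary estimate and keeps everything concrete. Both routes are valid; yours is cleaner conceptually and slightly stronger.
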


\begin{proof}
The asymptotics \eqref{eq-tpl} were already obtained in the previous works, see e.g. \cite[Lemma~4]{Pank13}
or \cite[Lemma~2.4]{ExMinPar14}. Furthermore, as the operator in question is just the Laplacian with suitable boundary conditions,
the lowest eigenfunction can be represented as
\[
\psi_\alpha(t)=C_1(\alpha)e^{kt}+C_2(\alpha)e^{-kt}, \quad k:=\sqrt{-E_1(T^+_\alpha)}.
\]
This function must satisfy the Dirichlet boundary condition at $t=\delta$,
which gives
\[
C_1(\alpha)=-e^{-2k\delta} C_2(\alpha)=\cO(e^{-2\delta\alpha})C_2(\alpha).
\]
Therefore, for large $\alpha$ we have
\begin{multline*}
1=\|\psi_\alpha\|^2=\int_0^\delta \big|
C_1(\alpha)e^{kt}+C_2(\alpha)e^{-kt}
\big|^2\dd t\\
=
\big|C_1(\alpha)\big|^2\dfrac{e^{2k\delta}-1}{2k}+ \big(\overline{C_1(\alpha)}C_2(\alpha)+C_1(\alpha)\overline{C_2(\alpha)}\big)\delta
+\big|C_2(\alpha)\big|^2\dfrac{1-e^{-2k\delta}}{2k}\\
=\dfrac{1}{2k}\,\big|C_2(\alpha)\big|^2 \big(1+ o(1)\big),
\end{multline*}
from which we infer
\[
\big|C_2(\alpha)\big|^2=2k +o(k)= \cO(\alpha), \quad \big|C_1(\alpha)\big|^2=\cO(k e^{-4\delta\alpha})=\cO(\alpha e^{-4\delta\alpha}),
\]
and, finally,
\[
\big|\psi_\alpha(0)\big|^2= 
\big|C_1(\alpha)\big|^2+ \big(\overline{C_1(\alpha)}C_2(\alpha)+C_1(\alpha)\overline{C_2(\alpha)}\,\big)
+\big|C_2(\alpha)\big|^2=\cO(\alpha).\qedhere
\]
\end{proof}
Now let us pick a point $s_0\in S$ with $m(s_0)=m_\mx$. Take
an arbitrary function $v\in C^\infty\big([0,+\infty)\big)$
which is not identically zero and such that $v'(0)=0$ 
and that $v(x)=0$ for $x\ge 1$.
Consider the functions
\[
u_\alpha(s,t)= v\big( \varepsilon^{-1}d(s,s_0)\big) \psi_\alpha(t),
\]
where $d$ means the geodesic distance on $S$ and $\varepsilon=\varepsilon(\alpha)=o(1)$
as $\alpha\to+\infty$ will be determined later. One easily checks that for $\alpha\to+\infty$ one has
$u_\alpha\in \widetilde H^1_0\big(S\times(0,\delta)\big)$ 
and that
\begin{align}
   \label{eq-th}
\theta(\alpha):=\int_S \Big|v\big( \varepsilon^{-1}d(s,s_0)\big)\Big|^2\dd S\equiv \|u_\alpha\|^2&= c_0 \varepsilon^{\nu-1} +o(\varepsilon^{\nu-1}), \quad c_0>0,\\
\int_{S\times(0,\delta)} g^{\rho\mu} \overline{\partial_\rho u_\alpha}\partial_\mu u_\alpha \dd S \dd t
&=\cO(\varepsilon^{\nu-3}). \nonumber
\end{align}
Furthermore,
\begin{multline*}
\int_{S\times(0,\delta)} |\partial_t u_\alpha|^2 \dd S \dd t -\int_S \big(\alpha+m(s)\big) |u_\alpha(s,0)|^2 \dd S\\
\begin{aligned}
=\,&\int_{S\times(0,\delta)} |\partial_t u_\alpha|^2 \dd S \dd t \\
&{}-(\alpha+m_\mx)\int_S |u_\alpha(s,0)|^2 \dd S
+\int_S \big(m_\mx-m(s)\big) |u_\alpha(s,0)|^2 \dd S\\
=\,&\int_S \Big|v\big( \varepsilon^{-1}d(s,s_0)\big)\Big|^2
\Big(\int_0^\delta \big|\psi_\alpha'(t)\big|^2\dd t
-(\alpha+m_\mx) \big|\psi_\alpha(0)\big|^2\Big)
\dd S\\
&+\big|\psi_\alpha(0)\big|^2
\int_S \big(m_\mx-m(s)\big) \Big|v\big( \varepsilon^{-1}d(s,s_0)\big)\Big|^2 \dd S\\
=\,&E_1(T^+_\alpha)\int_S \Big|v\big( \varepsilon^{-1}d(s,s_0)\big)\Big|^2 \int_0^\delta \big|\psi_\alpha(t)\big|^2\dd t
\dd S\\
&+\big|\psi_\alpha(0)\big|^2
\int_S \big(m_\mx-m(s)\big) \Big|v\big( \varepsilon^{-1}d(s,s_0)\big)\Big|^2 \dd S\\
=\,& - (\alpha+m_\mx)^2 \theta(\alpha) + o\big(\theta(\alpha)\big)\\
&{}+\cO(\alpha) \int_S \big(m_\mx-m(s)\big) \Big|v\big( \varepsilon^{-1}d(s,s_0)\big)\Big|^2 \dd S;
\end{aligned}
\end{multline*}
on the last step we used \eqref{eq-tpl} and \eqref{eq-pp1}.
As $m\in C^1$, one has, for $d(s,s_0)$ sufficiently small,
\begin{equation}
     \label{eq-mm1}
m_\mx-m(s) \le c_1 d(s,s_0),  \quad c_1>0.
\end{equation}
It follows that 
\[
\int_S \big(m_\mx-m(s)\big) \Big|v\big( \varepsilon^{-1}d(s,s_0)\big)\Big|^2 \dd S\le c_1 \varepsilon
\int_S \Big|v\big( \varepsilon^{-1}d(s,s_0)\big)\Big|^2 \dd S=\cO\big(\varepsilon \theta(\alpha)\big).
\]
Finally, using \eqref{eq-th},
\begin{align*}
\dfrac{k^+_\alpha(u_\alpha,u_\alpha)}{\|u_\alpha\|^2}&=
\dfrac{-(\alpha+m_\mx)^2\theta(\alpha) + \cO\big(\theta(\alpha)\big)+\cO(\varepsilon^{\nu-3})+\cO\big(\alpha\varepsilon\theta(\alpha)\big)}{ \theta(\alpha)}\\
&= -(\alpha+m_\mx)^2 + \cO(1)+\cO(\varepsilon^{-2})+\cO(\alpha\varepsilon)\\
&=-\alpha^2-2m_\mx \alpha +\cO(\varepsilon^{-2}+\alpha\varepsilon).
\end{align*}
In order to optimize we take $\varepsilon=\alpha^{-1/3}$, which gives
\begin{equation}
     \label{eq-kplus}
\dfrac{k^+_\alpha(u_\alpha,u_\alpha)}{\|u_\alpha\|^2}\le -\alpha^2-2m_\mx \alpha + \cO\big(\alpha^{2/3}\big).
\end{equation}

Now let us choose $j$ functions $v_1,\dots, v_j \in C^\infty\big([0,+\infty)\big)$
having disjoint supports, non identically zero, and such that $v_i'(0)=0$ 
and that $v_i(x)=0$ for $x\ge 1$ and any $i=1,\dots,j$.
Set
\[
u_{i,\alpha}(s,t)= v_i\big( \varepsilon^{-1}d(s,s_0)\big) \psi_\alpha(t).
\]
One easily checks that
\[
k^+_\alpha(u_{i,\alpha},u_{l,\alpha})=0 \text{ for } i\ne l,
\]
and the preceding computations give the estimate
\[
\dfrac{k^+_\alpha(u_{i,\alpha},u_{i,\alpha})}{\|u_{i,\alpha}\|^2}\le -\alpha^2-2m_\mx \alpha + \cO\big(\alpha^{2/3}\big).
\]
Therefore, by the max-min principle we have
\[
E_j(K^+_\alpha)\le -\alpha^2-2m_\mx \alpha + \cO\big(\alpha^{2/3}\big)
=-\alpha^2 -(\nu-1)H_\mx \,\alpha+ \cO\big(\alpha^{2/3}\big).
\]
By inserting this estimate into \eqref{eq-est3} we get the upper bound of~\eqref{eq-th1}.

If $S\in C^4$, then the remainder estimate can be slightly improved.
Namely, in this case we have $m\in C^2$. As $s_0$ is a local extremum, instead
of \eqref{eq-mm1} one can use 
\begin{equation}
     \label{eq-mm2}
m_\mx-m(s) \le c_1 d(s,s_0)^2,  \quad c_1>0,
\end{equation}
and the choice $\varepsilon:=\alpha^{-1/4}$
transforms \eqref{eq-kplus} into
\[
\dfrac{k^+_\alpha(u_\alpha,u_\alpha)}{\|u_\alpha\|^2}\le -\alpha^2-2m_\mx \alpha + \cO\big(\alpha^{1/2}\big).
\]
and then leads to 
\[
E_j(K^+_\alpha)\le -\alpha^2-2m_\mx \alpha + \cO\big(\alpha^{1/2}\big)
=-\alpha^2 -(\nu-1)H_\mx \,\alpha+ \cO\big(\alpha^{1/2}\big).
\]
This finishes the proof of Theorem~\ref{thm1}.

\section{Estimates for the mean curvature}\label{sec3}

\subsection{Mean curvature and comparison of eigenvalues}

The result of Theorem~\ref{thm1} provides, at least
for large $\alpha$, a geometric point of view at various inequalities involving the Robin eigenvalues.

For example, as shown in \cite{GiSm07}, for all $\alpha>0$ one has
$E_1(Q^\Omega_\alpha)<-\alpha^2$. For large $\alpha$ this has a simple interpretation
in view of Theorem~\ref{thm1}: one always has $H_\mx(\Omega)>0$.

Furthermore, in view of Theorem~\ref{thm1} one has a simple 
condition allowing one to compare
the Robin eigenvalues of two different domains for large $\alpha$: 

\begin{cor}\label{cor5}
Let $\Omega_1$ and $\Omega_2$ be two admissible domains in $\RR^\nu$. 
Assume that
\[
H_\mx(\Omega_1)<H_\mx(\Omega_2),
\]
then for any fixed $j\in\NN$ there exists $\alpha_0>0$ such that for all $\alpha>\alpha_0$ there holds
\[
E(Q^{\Omega_2}_\alpha)<E(Q^{\Omega_1}_\alpha).
\]
\end{cor}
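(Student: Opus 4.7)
The plan is to obtain Corollary~\ref{cor5} as a direct consequence of Theorem~\ref{thm1} by subtracting the asymptotic expansions of the eigenvalues of $Q^{\Omega_1}_\alpha$ and $Q^{\Omega_2}_\alpha$. Since both $\Omega_1$ and $\Omega_2$ are admissible, Theorem~\ref{thm1} applies to each of them with the same index $j$.

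First I would apply Theorem~\ref{thm1} separately to write, for $i=1,2$,
\[
E_j(Q^{\Omega_i}_\alpha) = -\alpha^2 - (\nu-1) H_\mx(\Omega_i)\,\alpha + R_i(\alpha),
\]
where $R_i(\alpha)=\cO(\alpha^{2/3})$ as $\alpha\to+\infty$. Subtracting these two identities, the quadratic terms cancel and one is left with
\[
E_j(Q^{\Omega_1}_\alpha) - E_j(Q^{\Omega_2}_\alpha) = (\nu-1)\bigl(H_\mx(\Omega_2) - H_\mx(\Omega_1)\bigr)\,\alpha + R_1(\alpha) - R_2(\alpha).
\]

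By hypothesis the coefficient $c := (\nu-1)\bigl(H_\mx(\Omega_2) - H_\mx(\Omega_1)\bigr)$ is strictly positive, while $R_1(\alpha)-R_2(\alpha)=\cO(\alpha^{2/3})$ is of strictly smaller order than the linear term $c\alpha$. Therefore there exists $\alpha_0>0$ such that for every $\alpha>\alpha_0$ the right-hand side is positive, which is exactly the strict inequality $E_j(Q^{\Omega_2}_\alpha) < E_j(Q^{\Omega_1}_\alpha)$ claimed in the corollary.

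There is no genuine obstacle here: all the work is contained in Theorem~\ref{thm1}, and the corollary is merely a bookkeeping step showing that when the subleading coefficients differ, the $\cO(\alpha^{2/3})$ remainder cannot overturn the sign of the linear gap. The only mild caveat is that $\alpha_0$ is not explicit, since it depends on the implicit constants in the remainders $R_i$, which in turn depend on the geometry of the two domains through the construction performed in Section~\ref{sec1}.
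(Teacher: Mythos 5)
Your argument is correct and is precisely the intended deduction: the paper states Corollary~\ref{cor5} without proof as an immediate consequence of Theorem~\ref{thm1}, and your subtraction of the two asymptotic expansions, with the observation that the positive linear gap $(\nu-1)\bigl(H_\mx(\Omega_2)-H_\mx(\Omega_1)\bigr)\alpha$ eventually dominates the $\cO(\alpha^{2/3})$ remainders, is exactly the implicit reasoning. Nothing is missing.
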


For example, another result obtained in~\cite[Th.~2.4]{GiSm07} is as follows:
if $\Omega$ satisfies a uniform sphere condition of radius $r$
and $B$ is a ball of radius $r/2$, then $E(Q^\Omega_\alpha)\ge E(Q^{B}_\alpha)$ 
for all $\alpha>0$. For large $\alpha$ this can be improved as follows:
\begin{prop}
Let $\Omega\subset \RR^\nu$ be an admissible domain satisfying 
the uniform sphere condition of radius $R>0$
and let $B$ be a ball of radius $r<R$. Then for any $j\in\NN$
one can find $\alpha_0>0$ such that for $\alpha\ge \alpha_0$
there holds $E_j(Q^\Omega_\alpha)>E_j(Q^B_\alpha)$.
\end{prop}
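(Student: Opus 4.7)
The plan is to reduce the proposition to Corollary~\ref{cor5} by establishing the strict curvature inequality $H_\mx(\Omega) < H_\mx(B)$ from the geometric hypotheses. The ball computation is immediate: the outward unit normal at a boundary point of $B$ is $n(s) = (s-c)/r$, where $c$ and $r$ denote the center and radius, so the shape operator equals $(1/r) I_s$, every principal curvature is $1/r$, and therefore $H_\mx(B)=1/r$.

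For $\Omega$ I would use the uniform interior sphere condition of radius $R$ to prove the upper bound $H_\mx(\Omega) \le 1/R$. At every $s_0 \in \partial\Omega$ there is a ball $B_R(x_0)\subset \Omega$ with $s_0 \in \partial B_R(x_0)$. The two surfaces $\partial\Omega$ and $\partial B_R(x_0)$ share the same tangent hyperplane and the same outward unit normal at $s_0$ (the latter because $x_0 \in \Omega$). Writing both surfaces locally as graphs over that common tangent plane, the inclusion $B_R(x_0)\subset \Omega$ in a neighborhood of $s_0$ yields a pointwise inequality between the graph functions, which translates at $s_0$ into a comparison of Hessians, i.e. of second fundamental forms. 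In particular, each principal curvature of $\partial\Omega$ at $s_0$ is bounded above by the corresponding value $1/R$ for the sphere $\partial B_R(x_0)$, whence $H(s_0)\le 1/R$ and, since $s_0$ was arbitrary, $H_\mx(\Omega)\le 1/R$.

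Combining the two bounds with the hypothesis $r<R$ produces
$$H_\mx(\Omega)\le \dfrac{1}{R} < \dfrac{1}{r}=H_\mx(B),$$
and then Corollary~\ref{cor5} applied with $\Omega_1=\Omega$ and $\Omega_2=B$ delivers $E_j(Q^B_\alpha)<E_j(Q^\Omega_\alpha)$ for all $\alpha\ge \alpha_0$, which is precisely the conclusion. The only genuinely substantive point is the Hessian comparison at the touching point, which is a classical consequence of the interior tangent ball; the rest is careful bookkeeping of signs, secured by the observation that $x_0\in\Omega$ forces the outward normals of $\Omega$ and of $B_R(x_0)$ at $s_0$ to coincide, so that the shape operators are compared with the same orientation.
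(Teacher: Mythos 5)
Your proposal is correct and follows essentially the same route as the paper: both reduce the claim to the curvature inequality $H_\mx(\Omega)\le 1/R < 1/r = H_\mx(B)$ and invoke Corollary~\ref{cor5}. The paper simply asserts that the uniform sphere condition gives $\max_j\kappa_j(s)\le 1/R$ and hence $H(s)\le 1/R$, while you spell out the interior tangent-ball/Hessian comparison behind that assertion; this is a matter of detail, not of method.
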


\begin{proof}
The uniform sphere condition of radius $R$ at $s$ implies the inequality
$\max_j \kappa_j(s)\le 1/R$, and due to $H(s)\le \max_j \kappa_j(s)$
we have $ H(s)\le 1/R$ and $H_\mx(\Omega)\le 1/R< 1/r=H_\mx(B)$,
and the assertion follows from Corollary~\ref{cor5}.
\end{proof}

In the following subsections we are going to discuss the problem
of minimizing $H_\mx$ among domains of a fixed volume.
Corollary~\ref{cor5} shows that this geometric problem is closely related
to the reverse Faber-Krahn inequality as discussed in the introduction.
We remark that for a given value of $\vol \Omega$ 
the value $H_\mx(\Omega)$ can be made arbitrarily small
in the class of admissible domains, 
as the example of thin spherical shells shows.
We are going to show that a positive lower bound still exists
in a smaller class in which the ball has the extremal property.

\subsection{Local minimizers}\label{ssec5}

We would like to show first that the balls are the only possible local minimizers of the problem.
Our argument is based on the so-called Alexandrov theorem~\cite{Al62}: Let $\Omega\subset\RR^\nu$
be a bounded, $C^2$ smooth, connected domain such that the mean curvature at the boundary
is constant, then $\Omega$ is a ball.

\begin{prop}
\label{P:localmin}
Let $\Omega\subset\RR^\nu$ be a $C^k$ smooth ($k\ge 2$), compact, connected domain, different from a ball. 
Then for any $\varepsilon>0$ there exists another $C^k$ smooth, compact, connected domain
$\Omega_\varepsilon\subset\RR^\nu$ with the following properties:
\begin{itemize}
\item $\vol\Omega_\varepsilon=\vol\Omega$,
\item the boundary of $\Omega_\varepsilon$ is diffeomorphic to that of $\Omega$,
\item the Hausdorff distance between $\Omega_\varepsilon$ and $\Omega$ is smaller than $\varepsilon$,
\item $H_\mx(\Omega_\varepsilon)<H_\mx(\Omega)$.
\end{itemize}
\end{prop}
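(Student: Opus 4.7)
The plan is to build $\Omega_\varepsilon$ as a small normal perturbation of $\partial\Omega$ that pushes the boundary outward on a plateau over the maximum set of $H$ and inward on a well-separated region, tuned to preserve volume to first order while driving the peak mean curvature strictly down. The main input is Alexandrov's theorem: since $\Omega$ is not a ball, $H$ cannot be constant on $S:=\partial\Omega$. Hence the compact set $M:=\{s\in S:H(s)=H_\mx\}$ is a proper subset of $S$, so one can find $\delta>0$ and an open set $V\subset S\setminus M$ with $H\le H_\mx-2\delta$ on $V$.

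I would choose a cut-off $\tau_1\in C^k(S)$ with $0\le\tau_1\le 1$, equal to $1$ on $\{H\ge H_\mx-\delta/3\}$ and vanishing on $\{H\le H_\mx-2\delta/3\}$ (and in particular outside $V$). Pick a non-negative $\tau_2\in C_c^\infty(V)$ not identically zero and set
\[
\tau:=\tau_1-c\tau_2,\qquad c:=\frac{\int_S\tau_1\,\dd S}{\int_S\tau_2\,\dd S}>0,
\]
so that $\int_S\tau\,\dd S=0$. For small $t>0$ the map $s\mapsto s+t\tau(s)n(s)$ is a $C^k$ diffeomorphism of $S$ onto a hypersurface bounding a domain $\Omega_t$, diffeomorphic to $\Omega$, at Hausdorff distance $\cO(t)$ from $\Omega$ and of volume $\vol\Omega_t=\vol\Omega+\cO(t^2)$ (first variation of volume vanishes).

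The key ingredient is the first-variation formula for the mean curvature under a normal deformation of speed $\tau$:
\[
H_t\bigl(s+t\tau(s)n(s)\bigr)=H(s)-\frac{t}{\nu-1}\bigl(\Delta_S\tau(s)+|L_s|^2\tau(s)\bigr)+\cO(t^2),
\]
with $|L_s|^2:=\kappa_1^2+\cdots+\kappa_{\nu-1}^2$ and $\Delta_S$ the Laplace--Beltrami operator. On the plateau $\{H\ge H_\mx-\delta/3\}\supset M$ one has $\tau\equiv 1$ in a full neighborhood, hence $\Delta_S\tau=0$, and Cauchy--Schwarz yields $|L_s|^2\ge(\nu-1)H(s)^2\ge(\nu-1)(H_\mx-\delta/3)^2$, so
\[
H_t\le H_\mx-t(H_\mx-\delta/3)^2+\cO(t^2).
\]
Off the plateau one has $H\le H_\mx-\delta/3$, while $|\partial_tH_t|$ is uniformly bounded by some constant depending on $\tau$; hence $H_t\le H_\mx-\delta/3+\cO(t)$ there. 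Both bounds give $H_\mx(\Omega_t)<H_\mx(\Omega)$ for all sufficiently small $t>0$.

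To conclude, I would restore the volume exactly by applying a homothety $x\mapsto x_0+\lambda_t(x-x_0)$ with $\lambda_t=(\vol\Omega/\vol\Omega_t)^{1/\nu}=1+\cO(t^2)$ centered at a fixed interior point $x_0\in\Omega$. This rescales $H_\mx$ by $1/\lambda_t=1+\cO(t^2)$, which is dominated by the $\cO(t)$ gain already obtained, and perturbs the Hausdorff distance by only $\cO(t^2)$. The resulting $\Omega_\varepsilon$ then satisfies all four required properties for $t$ sufficiently small. The main technical obstacle is the first-variation formula together with the case distinction between the plateau (where the vanishing of $\Delta_S\tau$ combined with the Cauchy--Schwarz lower bound on $|L_s|^2$ produces a linear-in-$t$ strict decrease) and its complement (where the $\cO(t)$ fluctuation is absorbed by the positive margin $\delta/3$); the variation formula itself is classical, but should be checked against the paper's conventions for the outward normal $n$ and the shape operator $L_s$.
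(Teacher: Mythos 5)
Your argument is a genuinely different route from the paper's, and the overall idea is sound, but there is a regularity gap and one minor imprecision to fix.

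The paper's proof never touches the first-variation formula for the mean curvature. It picks a point $s_*$ with $H(s_*)<H_\mx$ (existence by Alexandrov), pushes the boundary \emph{inward} near $s_*$ by subtracting a bump from the local graph function; this keeps the boundary unchanged near the maximum set, so $H_\mx$ is preserved exactly, while the volume drops at order $\varepsilon$. A global dilation by the factor $(\vol\Omega/\vol\widetilde\Omega_\varepsilon)^{1/\nu}=1+\cO(\varepsilon)$ restores the volume and multiplies $H_\mx$ by a factor $<1$; this is where the strict decrease comes from. Your scheme is dual: you deform \emph{outward} on a plateau over the maximum set and inward elsewhere, balanced so that $\int_S\tau\,\dd S=0$; the first-variation formula $\partial_t H=-\tfrac{1}{\nu-1}(\Delta_S\tau+|L_s|^2\tau)$ together with $|L_s|^2\ge(\nu-1)H^2$ produces a first-order drop of $H_\mx$ directly, and the final rescale is only an $\cO(t^2)$ correction. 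Both work in principle, but the paper's route is more elementary: it needs only a $C^0$ continuity estimate for $H$ under a graph perturbation, whereas you invoke a more delicate differential-geometric identity.

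Two points should be repaired in your write-up. First, a small one: for $\Delta_S\tau$ to vanish you need $\tau\equiv1$ in an actual open neighborhood of the set you call the plateau; since $\tau_1$ is only required to equal $1$ on the closed set $\{H\ge H_\mx-\delta/3\}$ and may start to drop immediately at its boundary, you should shrink the plateau to, say, $\{H\ge H_\mx-\delta/4\}$, which is compactly contained in $\{H>H_\mx-\delta/3\}$ where $\tau_1\equiv1$; the margin estimate off the plateau goes through with $\delta/4$ in place of $\delta/3$. Second, and more seriously: the map $s\mapsto s+t\tau(s)n(s)$ is only $C^{k-1}$ because the Gauss map $n$ is $C^{k-1}$, so the deformed hypersurface is $C^{k-1}$, not $C^k$. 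The proposition allows $k=2$, in which case your deformed boundary is only $C^1$ and the mean curvature, let alone the first-variation formula, is not even defined on it. The paper sidesteps this by deforming the graph function $\varphi\mapsto\varphi-\varepsilon\psi$ inside a single chart, which manifestly preserves $C^k$ regularity. You would need to replace the normal deformation by a chartwise graph deformation (and rewrite the variation formula accordingly), or explicitly restrict to $k\ge3$, or add an approximation step, before your argument covers the stated case $k\ge2$.
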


\begin{proof}
By Alexandrov's theorem,
the mean curvature $H$ on $\partial \Omega$ is non-constant, so we can find a point
$s_*\in \partial\Omega$ with
$H(s_*)<H_\mx(\Omega)$.
By applying a suitable rotation, without loss of generality we may assume that
for $x=(x',x_\nu)$, $x'=(x_1,\dots,x_{\nu-1})$, close to $s_*$ the condition $x\in\Omega$
is equivalent to $x_\nu< \varphi(x')$, where
$\varphi$ is a $C^k$ function defined in a ball $B\subset\RR^{\nu-1}$,
and hence the boundary $\partial\Omega$ near $s_*$ coincides with the graph of $\varphi$.
By the choice of $s_*$ and using the continuity of $H$ we may additionally
assume that $B$ is chosen in such a way there exists $\delta>0$
such that
\begin{equation}
        \label{eq-hhs}
H(s)\le H_\mx(\Omega)-\delta \text{ for any point }
s\in S_B:=\big\{\big(x',\varphi(x')\big): \,x'\in B\big\}.
\end{equation}

Choose any non-negative $\psi \in C^\infty_c(B)$ which is not identically zero.
If $\varepsilon_0>0$ is sufficiently small, then for all $\varepsilon\in [0,\varepsilon_0)$,
one has the inclusion
\[
\Theta_\varepsilon:=\big\{
(x',x_\nu): \, x'\in B, \, \varphi(x')-\varepsilon \psi(x')<x_\nu<\varphi(x')
\big\}\subset \Omega,
\]
and we set $\widetilde\Omega_\varepsilon:=\Omega\setminus\overline{\Theta_\varepsilon}$.
By construction we have, for $\varepsilon\in(0,\varepsilon_0)$,
\begin{equation}
         \label{eq-vv}
\vol \widetilde\Omega_\varepsilon = \vol \Omega -\varepsilon \int_B\psi(x')dx'  <  \vol \Omega,
\end{equation}
and the Hausdorff distance between $\widetilde\Omega_\varepsilon$ and $\Omega$
is at most $\|\psi\|_{L^\infty(B)}\cdot\varepsilon$.

Furthermore,
\[
\partial\widetilde\Omega_\varepsilon:=(\partial \Omega\setminus S_B)\mathop{\cup}S^\varepsilon_B
\text{ with } S^\varepsilon_B:=\big\{\big(x',\varphi(x')-\varepsilon\psi(x')\big): \,x'\in B\big\},
\]
and $S_B^\varepsilon$ is diffeomorph to $S_B$ (as the both are graphs of smooth functions).
Furthermore, as $\psi$ has a compact support, $S_B^\varepsilon$ coincides with $S_B$ near the boundary,
and, finally, $\partial\widetilde\Omega_\varepsilon$ is diffeomorph to $\partial\Omega$
for any $\varepsilon\in(0,\varepsilon_0)$.

For $x'\in B$, we denote by $H(x',\varepsilon)$ the mean curvature of $\partial\widetilde\Omega_\varepsilon$
at  the point $\big(x',\varphi(x')-\varepsilon\psi(x')\big)$, then
$\big\|H(\cdot,\varepsilon)-H(\cdot,0)\big\|_{L^\infty(B)}=\cO(\varepsilon)$,
and in virtue of~\eqref{eq-hhs} we can find $\varepsilon_1\in(0,\varepsilon_0)$
such that for any $\varepsilon\in(0,\varepsilon_1)$ we have
$H(x',\varepsilon)<H_\mx(\Omega)$ for all $x'\in B$, which gives
$H_\mx(\widetilde \Omega_\varepsilon)=H_\mx(\Omega)$.

Now set
\[
\Omega_\varepsilon:=\Big(\frac{\vol \Omega}{\mathstrut\vol \widetilde\Omega_\varepsilon}\Big)^{1/\nu}\widetilde\Omega_\varepsilon,
\]
then $\vol\Omega_\varepsilon=\vol \Omega$, and
\[
H_{\mx}(\Omega_\varepsilon)=
\Big(\frac{\vol \widetilde \Omega_\varepsilon}{\vol \Omega}\Big)^{1/\nu} H_{\mx}(\widetilde\Omega_\varepsilon)
\equiv
\Big(\frac{\vol \widetilde \Omega_\varepsilon}{\vol \Omega}\Big)^{1/\nu} H_{\mx}(\Omega)
< H_{\mx}(\Omega).
\]
The boundary of $\Omega_\varepsilon$ is clearly diffeomorph
to that of $\widetilde\Omega_\varepsilon$ and, consequently,
to that of $\Omega$, and using~\eqref{eq-vv}
one easily checks that that the Hausdorff distance
between $\Omega_\varepsilon$ and $\Omega$ is $\cO(\varepsilon)$.
\end{proof}

\subsection{Proof of Theorem \ref{thm2}}\label{ssec6}
We now prove Theorem \ref{thm2}. Without loss of generality we assume that $\Omega$ is star-shaped with respect to the origin.
Let us introduce a function $p$ on $S$ by $p(s):=s\cdot n(s)$, where $\cdot$ stands for the scalar product in $\RR^\nu$.
The function $p$ is called sometimes the support function.
We will use two integral identities related to $\Omega$.
First, due to the divergence theorem we have
\[
\vol \Omega=\frac{1}{\nu}\int_{S}p \, \dd S .
\]
Furthermore, we have the so-called Minkowski formula
\[
\area S=\int_{S} p\,H\,\dd S,
\]
see \cite[Theorem 1]{Hs54}; remark that in \cite{Hs54}, the mean curvature $M_{1}$ corresponds to $(-H)$ with our notation. 

As $\Omega$ is star-shaped, we have $p \ge 0$, and we deduce 
\[
\area S \le H_\mx(\Omega) \int_S p\, \dd S = \nu H_\mx(\Omega)  \vol \Omega,
\]
which gives
\begin{equation}
        \label{eq-bb1}
H_{\mx}(\Omega) \ge \frac{1}{\nu}\frac{\area S}{\vol \Omega}.
\end{equation}
Due to the classical isoperimetric inequality we have 
\begin{equation}
        \label{eq-bb2}
\area S\geq \nu \Big(\dfrac{\vol B_\nu}{\vol \Omega}\Big)^{1/\nu} \vol \Omega,
\end{equation}
and the equality holds only if $\Omega$ is a ball~\cite[\S 2.10]{Bur}.
Substituting \eqref{eq-bb2} into \eqref{eq-bb1} we obtain
\begin{equation}
      \label{eq-bb3}
H_\mx(\Omega)\ge \left(\frac{\vol B_\nu}{\vol \Omega}\right)^{1/\nu}.
\end{equation}
If one has the equality in \eqref{eq-bb3}, then automatically one has the equality in~\eqref{eq-bb2},
which gives in turn that $\Omega$ is a ball. On the other hand, is $\Omega$ is a ball
of radius $r>0$, then $H_\mx(\Omega)=1/r$ and $\vol\Omega= r^\nu \vol B_\nu$, and 
one has the equality in~\eqref{eq-bb3}.

\section*{Acknowledgments}
The research was partially supported by ANR NOSEVOL and GDR DynQua.
We thank Lorenzo Brasco, Pierre Pansu and Nader Yeganefar for motivating
discussions.

\end{document}